\newcommand{\Href}[2]{\hyperref[#2]{#1~\ref{#2}}}
\def\R{{\mathbb R}}
\def\diam{\mathop{\rm diam}}
\newcommand{\Red}{\R^d}%
\newcommand{\vol}[1]{\operatorname{vol}\nolimits_{#1}}%
\newcommand{\la}{\lambda}
\def\co{\mathop{\rm{conv}}}
\newcommand{\iprod}[2]{\left\langle#1,#2\right\rangle}%
\newcommand{\cof}[1]{\co \! \left\{#1\right\}}
\def\polar{\circ}
\newcommand{\st}{:\;}
\providecommand{\abs}[1]{\lvert#1\rvert}%
\newcommand{\di}{\,\mathrm{d}}
\DeclareMathOperator{\tr}{\mathrm{trace}}
\newcommand{\id}{\mathrm{Id}}
\numberwithin{equation}{section}
\newtheorem{thm}{Theorem}
\newtheorem{lem}{Lemma}[section]
\newtheorem{conj}{Conjecture}[section]
\title{A Quantitative Helly-type Theorem: Containment in a Homothet}
\author{Grigory Ivanov\address{Grigory Ivanov: 
Institute of Science and Technology Austria (IST Austria), 
Kleusteneuburg, 3400, Austria; Laboratory of Combinatorial and Geometrical Structures, Moscow Institute of Physics and Technology, Moscow, 141701, Russia}
\email{grimivanov@gmail.com}
\and
M\'arton Nasz\'odi\address{M\'arton Nasz\'odi:
MTA-ELTE Lend\"ulet Combinatorial Geometry Research Group;
Dept. of Geometry, Lor\'and E\"otv\"os University, Budapest}
\email{marton.naszodi@math.elte.hu}
}
\thanks{G.I.~acknowledges the financial support from the Ministry of Educational and Science
of the Russian Federation in the framework of MegaGrant no 075-15-2019-1926.
\\
M.N. was supported by the National Research, Development and Innovation Fund (NRDI) grants K119670 and KKP-133864 as
well as the Bolyai Scholarship of the Hungarian Academy of Sciences and the New National
Excellence Programme and the TKP2020-NKA-06 program provided by the NRDI.
}
\subjclass[2020]{52A35 (primary), 52A35, 52A27}
\keywords{Helly-type theorem, centroid, John's ellipsoid, Santal\'o point}
\begin{document}
\begin{abstract}
We introduce a new variant of quantitative Helly-type theorems: the minimal \emph{``homothetic distance''} of the intersection of a family of convex sets to the intersection of a subfamily of a fixed size. As an application, we establish the following quantitative Helly-type result for the \emph{diameter}. If $K$ is the intersection of finitely many convex bodies in $\mathbb{R}^d$, then one can select $2d$ of these bodies whose intersection is of diameter at most $(2d)^3\mathrm{diam}(K)$.
The best previously known estimate, due to Brazitikos, is $c d^{11/2}$. Moreover, we confirm that the multiplicative factor $c d^{1/2}$ conjectured by B\'ar\'any, Katchalski and Pach cannot be improved.
\end{abstract}
\maketitle

\section{Introduction}
In \cite{barany1982quantitative} (see also \cite{barany1984helly}), B\'ar\'any, Katchalski and Pach proved the following two statements. According to the \textbf{Quantitative Volume Theorem}, 
{\it
 if the intersection of a family of convex sets in $\Red$ is of volume one, then the intersection of some subfamily of size at most $2d$ is of volume at most $v(d)$, a constant depending only on $d$.
}
The \textbf{Quantitative Diameter Theorem} states that
{\it
 if the intersection of a family of convex sets in $\Red$ is of diameter one, then the intersection of some subfamily of size at most $2d$ is of diameter at most $\delta(d)$, a constant depending only on $d$.
}

In \cite{barany1982quantitative}, B\'ar\'any, Katchalski and Pach established an upper bound of roughly $d^{d^2}$ on $v(d)$ and conjectured that $v(d) \leq (d)^{cd}$ holds for a constant $c > 0$. 
Nasz\'odi confirmed this conjecture in \cite{naszodi2016proof} using contact points of the John ellipsoid \cite{John} of the intersection of the family of convex sets. The current best bound, $v(d)\leq(cd)^{3d/2}$, is due to Brazitikos \cite{brazitikos2017brascamp}.

In \cite{barany1982quantitative}, the authors obtained a bound on $\delta(d)$ which is exponentional in the dimension, and formulated the following conjecture.
\begin{conj}[B\'ar\'any, Katchalski, Pach \cite{barany1982quantitative}]\label{conj:BKPdiam}
 \[\delta(d)\leq c \sqrt{d}\]
 with a universal constant $c>0$.
\end{conj}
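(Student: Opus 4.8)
The plan is to deduce Conjecture~\ref{conj:BKPdiam} from a selection statement for the contact points of John's ellipsoid, organised around containment in a homothet. Write $K=\bigcap_{i=1}^{n}K_i$. Since $K\subseteq\bigcap_{i\in S}K_i$ for every index set $S$, it suffices to produce $z\in K$ and $S$ with $|S|\le 2d$ such that $\bigcap_{i\in S}K_i\subseteq z+\la(K-z)$ with $\la\le c\sqrt d$: a homothety of ratio $\la$ multiplies diameters by $\la$, so this yields $\diam\bigl(\bigcap_{i\in S}K_i\bigr)\le c\sqrt d\,\diam K$. Such a homothety relation is affine-covariant (both the inclusion and the ratio $\la$ are preserved by an affine map, the centre merely being transported), so we are free to normalise $K$ by an affine transformation.

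First I would put $K$ in John position: assume the ellipsoid of maximal volume inscribed in $K$ is $B_2^d$, so that $B_2^d\subseteq K\subseteq d\,B_2^d$, take $z=0$, and let $u_1,\dots,u_N\in S^{d-1}\cap\partial K$ be the John contact points with weights $c_j>0$ realising $\sum_j c_j\,u_j u_j^{\top}=\id$ and $\sum_j c_j u_j=0$ (so $\sum_j c_j=\tr\bigl(\sum_j c_j u_j u_j^{\top}\bigr)=d$). The structural observation is that the supporting halfspace $H_j=\{x\st\iprod{u_j}{x}\le 1\}$ of $K$ at $u_j$ is already an outer halfspace of one of the $K_i$: some $K_{i(j)}$ must contain $u_j$ in its boundary (else $u_j$ would be interior to every $K_i$, hence to $K$), and since $B_2^d\subseteq K_{i(j)}$ any unit outer normal $w$ of $K_{i(j)}$ at $u_j$ satisfies $1=\iprod{w}{w}\le\iprod{w}{u_j}\le 1$, forcing $w=u_j$ and hence $K_{i(j)}\subseteq H_j$. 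Fixing one index $i(j)$ for each $j$, the problem reduces to the selection statement: \emph{one can choose a set $S'$ of $2d$ contact points with $\cof{u_j\st j\in S'}\supseteq\rho\,B_2^d$ for some $\rho\ge c/\sqrt d$}. Indeed, with $S=\{i(j)\st j\in S'\}$ one then has $\bigcap_{i\in S}K_i\subseteq\bigcap_{j\in S'}H_j=\bigl(\cof{u_j\st j\in S'}\bigr)^{\polar}\subseteq\tfrac1\rho B_2^d\subseteq\tfrac1\rho K$, a homothet of $K$ with ratio $\la=1/\rho\le\sqrt d/c$ about $0\in K$.

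For the selection I would run a spectral sparsification of the decomposition of the identity $\sum_j c_j u_j u_j^{\top}=\id$ in the spirit of Batson--Spielman--Srivastava: extract a set $S'$ with $|S'|\le 2d$ and weights $c_j'>0$ ($j\in S'$) such that $\epsilon_0\,\id\preceq\sum_{j\in S'}c_j' u_j u_j^{\top}\preceq C\,\id$ for universal constants $0<\epsilon_0\le C$ (so $\sum_{j\in S'}c_j'=\tr\bigl(\sum_{j\in S'}c_j' u_j u_j^{\top}\bigr)\le Cd$). For every unit vector $\theta$ this gives $\sum_{j\in S'}c_j'\iprod{u_j}{\theta}^2\ge\epsilon_0$, hence $\max_{j\in S'}\iprod{u_j}{\theta}^2\ge\epsilon_0/\sum_{j\in S'}c_j'=\Omega(1/d)$, so some selected contact point has $|\iprod{u_j}{\theta}|\ge c/\sqrt d$. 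If $K$ is centrally symmetric the contact points come in antipodal pairs; choosing $S'$ symmetric makes the barycentric condition $\sum_{j\in S'}c_j' u_j=0$ automatic and upgrades the last estimate to $\iprod{u_j}{\theta}\ge c/\sqrt d$ for \emph{every} direction $\theta$, so $\cof{u_j\st j\in S'}\supseteq(c/\sqrt d)B_2^d$ and Conjecture~\ref{conj:BKPdiam} follows in the symmetric case. The main obstacle is the general case: to keep $0$ in the interior of $\cof{u_j\st j\in S'}$ one must carry the constraint $\sum_{j\in S'}c_j' u_j=0$ through the sparsification, and without antipodal symmetry the relation $\sum_{j\in S'}c_j'\iprod{u_j}{\theta}=0$ only bounds from below the \emph{product} of the reach of the chosen normals in the two opposite directions $\theta$ and $-\theta$ (keeping it $\Omega(1/d)$), not each reach separately — costing a further power of the dimension and leaving only a polynomial bound on $\delta(d)$. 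A sparsifier of a decomposition of the identity into $O(d)$ terms that respects the barycenter while keeping the convex hull of the chosen directions $\Omega(1/\sqrt d)$-round is precisely what is missing.

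Finally, to confirm that the factor $c\sqrt d$ cannot be improved, I would realise $B_2^d$, of diameter $2$, as the intersection of a sufficiently fine finite family of its tangent halfspaces. Any $2d$ members of this family intersect in a convex set containing $B_2^d$ and cut out by at most $2d$ hyperplanes; a convex body with at most $2d$ facets that contains $B_2^d$ has circumradius at least a constant multiple of $\sqrt d$ — the cube, polar to the cross-polytope, being essentially extremal, equivalently: the convex hull of at most $2d$ unit vectors has inradius $O(1/\sqrt d)$ — so its diameter is at least $c\sqrt d\,\diam(B_2^d)$. Hence every subfamily of size $2d$ has intersection of diameter at least $c\sqrt d\,\diam K$, so $\delta(d)\ge c\sqrt d$ and the order $\sqrt d$ in Conjecture~\ref{conj:BKPdiam} is optimal.
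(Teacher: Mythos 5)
You are attempting to prove a \emph{conjecture}, not a theorem: Conjecture~\ref{conj:BKPdiam} is open, and the paper does not claim to resolve it. What the paper proves is the weaker upper bound $\delta(d)\le(2d)^3$ (Theorem~\ref{thm:BKP_vol_diam_d3}) together with the lower bound $\delta(d)\ge c\sqrt d$ (Theorem~\ref{thm:bkp_diam_lower_bound}). Your write-up is, to its credit, honest about this: you reduce the conjecture to a sparsification statement and then explicitly say ``a sparsifier \ldots that respects the barycenter while keeping the convex hull of the chosen directions $\Omega(1/\sqrt d)$-round is precisely what is missing.'' That is exactly the obstruction that has kept the conjecture open, so the proposal does not constitute a proof and should not be presented as one. (Your symmetric-case sub-argument also needs care: pairing contact points antipodally and running Batson--Spielman--Srivastava on the distinct directions naturally produces about $4d$ points, not $2d$, so even that special case is not settled by the sketch as written.)

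Beyond that, the two routes to what \emph{is} known are genuinely different. Your framing -- John position, contact points $u_j$ with $\sum c_j u_ju_j^\top=\id$, identify $K_{i(j)}\subseteq H_j$, then sparsify -- is the Brazitikos programme, and pushed through it yields only a polynomial bound ($cd^{11/2}$) for the reason you diagnose. The paper's Theorem~\ref{thm:bkp_grunbaum_direct} takes an elementary route that avoids the John ellipsoid entirely: in the polar body $Q=(K-z)^\polar$ one takes a maximal-volume inscribed simplex $S$, uses the classical containment $Q\subseteq -d(S-v)+v$ about the centroid $v$ of $S$, adds $d$ further extreme points via Carath\'eodory along the ray through $-v$ to control the recentring, and (for the sharper $2d$-point version) replaces one vertex of $S$ by the apex $v$ of a sub-simplex $S_1$ so that a facet of $S$ can be dropped. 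Dualising gives $\delta(d)\le(2d)^3$. This trades the hope of $\sqrt d$ for an unconditional, self-contained argument with the currently best constant.

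Your lower-bound paragraph is essentially the paper's Theorem~\ref{thm:bkp_diam_lower_bound} in sketch form: take the $K_i$ to be tangent halfspaces of $B_2^d$ so that $K$ is nearly the ball, and show that any $2d$ of them leave a point of norm $\Omega(\sqrt d)$. The paper makes the quantitative claim precise via the Ball--Prodromou trace lemma (Lemma~\ref{lem:trace_bound_tight_frame}), producing a point of norm $d/\sqrt{|\sigma|}$ in $K_\sigma$, whereas you appeal informally to ``a convex body with at most $2d$ facets containing $B_2^d$ has circumradius $\Omega(\sqrt d)$'' with the cube as the extremal example. The conclusion is the same, but you would need to cite or prove that inradius/circumradius fact; the Ball--Prodromou lemma is the clean way to do it. In short: the lower bound and the optimality of $\sqrt d$ you can salvage; the upper bound you outline is a known partial result with a named gap, and the conjecture itself remains unproved.
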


 Brazitikos \cite{brazitikos2018polynomial} established the first polynomial bound on $\delta(d)$ (see also \cite{Brazitikos2016Diam}): 
using a sparsification result from \cite{BSS14}
 (see also \cite[Lemma 3.1]{barvinok_thrifty}) related to contact points of John's ellipsoid, he showed $\delta(d) \leq c d^{11/2}$ with an absolute constant $c > 0$. 
Recently, Dillon and Sober\'on \cite[Theorem 1.2]{dillon2020m} showed that a fractional version of 
Conjecture~\ref{conj:BKPdiam} holds.
 
 Since $v(1) = \delta(1) = 1,$ we will assume that 
 $d \geq 2$ throughout the paper. We use $[n]$ and $\binom{[n]}{k}$
to denote the sets $\{1, \dots, n\}$ and the set of all $k$-element subsets of $[n],$ respectively;   
for a family of   sets $\{K_1,\dots,K_n\}$ and
$\sigma \subset [n],$ $K_\sigma$ denotes the intersection
$\bigcap\limits_{i \in \sigma} K_i$.

We prove that $v(d)\leq(2d)^{3d}$ and $\delta(d)\leq(2d)^3$.
\begin{thm}
\label{thm:BKP_vol_diam_d3}
  Let $\{K_1,\dots,K_n\}$ be a family of  closed convex sets in $\Red$ such that their intersection 
 $K=K_1 \cap \dots \cap K_n$ is a convex body.  Then there is a 
$\mu\in\binom{[n]}{\leq2d}$ such that  
\[
\vol{d} K_\mu \leq (2d)^{3d} \vol{d} K
\quad \text{and} \quad   
\diam K_\mu \leq  (2d)^3 \diam K. 
\] 
\end{thm}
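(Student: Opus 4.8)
The plan is to deduce Theorem~\ref{thm:BKP_vol_diam_d3} from a single ``containment in a homothet'' statement, the one advertised by the title: \emph{there exist $\mu\in\binom{[n]}{\le 2d}$, a scalar $\la\le(2d)^3$ and a vector $t\in\Red$ with $K\subseteq K_\mu\subseteq\la K+t$.} Granting this, applying $\vol{d}$ to both sides gives $\vol{d}K_\mu\le\la^d\vol{d}K\le(2d)^{3d}\vol{d}K$, and applying $\diam$ gives $\diam K_\mu\le\la\diam K\le(2d)^3\diam K$, which is precisely the assertion of the theorem; so the whole task is to exhibit such a homothet. Since containment in a homothet -- and with it both conclusions -- is preserved by invertible affine maps, I would first put $K$ in John position: assume the maximal-volume ellipsoid inside $K$ is the unit ball $B$, so that $B\subseteq K\subseteq dB$. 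Let $u_1,\dots,u_m$ be the contact points of $B$ with $\partial K$ and $c_1,\dots,c_m>0$ the John weights, so that $\sum c_ju_j\otimes u_j=\id$, $\sum c_ju_j=0$ and $\sum c_j=d$. Two facts will be used: (a) each $u_j$ is a \emph{smooth} point of $\partial K$ -- the only supporting hyperplane of $K$ there is the tangent hyperplane $\{x\st\iprod{x}{u_j}=1\}$ of $B$, because $B\subseteq K$; and (b) a short estimate from $\sum c_ju_j\otimes u_j=\id$ and $\sum c_ju_j=0$ shows that $\cof{u_1,\dots,u_m}\supseteq\tfrac1d B$, equivalently $\bigcap_{j=1}^m\{x\st\iprod{x}{u_j}\le1\}\subseteq dB$.

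Next I would pass from the bodies to halfspaces. Since $K=\bigcap_iK_i$ is a body, $\operatorname{int}K=\bigcap_i\operatorname{int}K_i$, so every point of $\partial K$ lies on $\partial K_i$ for some $i$; applying this to a contact point $u_j$ and invoking (a), the supporting halfspace of that $K_i$ at $u_j$ must coincide with $H_j:=\{x\st\iprod{x}{u_j}\le1\}$, whence $K_i\subseteq H_j$. Choosing one such index $i(j)$ for every $j$ and using (b),
\[
\bigcap_{j=1}^m K_{i(j)}\ \subseteq\ \bigcap_{j=1}^m H_j\ \subseteq\ dB\ \subseteq\ dK .
\]
This already proves the homothet statement with $\la=d$, except that the index set $\{i(1),\dots,i(m)\}$ may have size $m=O(d^2)$ rather than $\le 2d$.

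The crux is therefore to retain only $2d$ of the contact points while keeping the intersection bounded and of controlled size: I must find $J\subseteq\{1,\dots,m\}$ with $|J|\le 2d$ such that $\bigcap_{j\in J}H_j$ lies in a Euclidean ball of radius $(2d)^3$ -- equivalently, $2d$ of the $u_j$ whose convex hull contains a ball of radius $\ge(2d)^{-3}$. By (b) together with $\sum c_ju_j=0$ the origin is a deep interior point of $\cof{u_1,\dots,u_m}$, and what is needed is a \emph{quantitative} Steinitz/Carath\'eodory selection: out of a point set whose convex hull contains $\tfrac1d B$, pick $2d$ points whose hull still contains a ball of radius $\gtrsim d^{-3}$ about some point. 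I expect this to be the main difficulty. An exact John decomposition cannot be thinned to $O(d)$ terms, and thinning it \emph{approximately} -- the Batson--Spielman--Srivastava sparsification underlying Brazitikos's bound -- is precisely what costs the extra powers of $d$; so a different, loss-aware selection is called for. The plan is to carry out the selection around a well-chosen centre of $\cof{u_1,\dots,u_m}$ (guided by its centroid, and by Santal\'o-point considerations for the dual configuration), adding contact points greedily so that after $2d$ of them the retained directions positively span with a quantitatively controlled constant, and then to weigh the John factor $d$ against the selection loss so as to land exactly at $(2d)^3$.

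Finally, with such a $J$ in hand, set $\mu=\{i(j)\st j\in J\}$, so $|\mu|\le 2d$; then $K\subseteq K_\mu\subseteq\bigcap_{j\in J}H_j$, and the last set, lying in a Euclidean ball of radius $(2d)^3$, is -- because $B\subseteq K$ -- contained in a translate of $(2d)^3K$. This is the homothet statement, and the volume and diameter bounds follow as in the first paragraph. (Worth noting: the scheme uses no regularity of the individual $K_i$, since the contact points of $B$ are automatically smooth points of $K$; only the combinatorial pattern of which $K_i$ passes through which $u_j$ enters.)
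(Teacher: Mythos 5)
Your reduction to a ``containment in a homothet'' statement is exactly the right target, and the first two steps (John position, and passing from bodies to the halfspaces $H_j$ via smoothness of $\partial K$ at the contact points) are sound. But the proof stops precisely where the real work is. The step you flag as ``the crux'' --- choosing $J\subseteq\{1,\dots,m\}$ with $|J|\le 2d$ so that $\bigcap_{j\in J}H_j$ lies in a ball of radius $(2d)^3$, equivalently thinning $\{u_1,\dots,u_m\}$ to $2d$ points whose hull still contains a ball of radius $\gtrsim d^{-3}$ --- is not proven; the paragraph about adding contact points ``greedily,'' guided by the centroid and by ``Santal\'o-point considerations for the dual configuration,'' is a plan, not an argument. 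No bound on the loss of such a greedy selection is established, and it is far from clear that any greedy scheme on the fixed set of John contact points achieves $(2d)^3$. As written the proposal is therefore incomplete at its central step.

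It is also worth noting that the route you sketch is structurally the Nasz\'odi--Brazitikos route (John contact points plus a sparsification/Steinitz-type selection), and you correctly observe that BSS-type sparsification is what costs the extra powers of $d$ in Brazitikos's $cd^{11/2}$ bound. The paper deliberately avoids this: it never touches John's ellipsoid. Instead it polarizes, sets $Q=(K-z)^\polar$ with $z$ a Gr\"unbaum-type center so that $Q\subseteq-\la Q$ with $\la\le d$, takes a \emph{maximal-volume simplex} $S$ inscribed in $Q$ (which automatically satisfies $-d(S-v)+v\supseteq Q$), and then uses Carath\'eodory to add $d$ more extreme points of $Q$ to capture the reflected centroid; for the $2d$ version one vertex of $S$ is dropped by passing to a sub-simplex $S_1=\cof{v,F}$. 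Each retained extreme point of $Q$ comes from some $(K_i-z)^\polar$, giving $\mu$. This sidesteps the quantitative Carath\'eodory/Steinitz selection entirely and is where the $(2d)^3$ comes from. If you want to salvage your outline, the missing selection lemma would have to be proved on its own; alternatively, adopting the polar maximal-volume-simplex argument replaces it cleanly.
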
 
The bound on $v(d)$ is not the best, as both \cite{naszodi2016proof} and \cite{brazitikos2017brascamp} provide stronger estimates. 
The method that yields it is new and quite simple as it does not require the use of the John ellipsoid. The bound on $\delta(d)$, on the other hand, is currently the best.

As we will see, Theorem~\ref{thm:BKP_vol_diam_d3} follows from our main result which concerns a very rough approximation of a convex polytope by the convex hull of $2d$ of its well-chosen vertices.
\begin{thm}
\label{thm:bkp_grunbaum_direct}
Let $\la > 0$ and let $Q \subset \Red$ be a convex body satisfying the inclusion
$Q \subset -\la Q$. Then
 \begin{enumerate}
  \item\label{item:bkp_grun_direct_2dplus} 
there is a subset $Q^\prime$ of  at most $2d+1$ extreme points of $Q$  such that
 \begin{equation*}
  Q \subset - (\la + 1)(d+1) \co Q^\prime,
 \end{equation*} and
   \item\label{item:bkp_grun_direct_2d}
there is a  subset $Q^{\prime \prime}$ of  at most $2d$ extreme points  of $Q$  such that
 \begin{equation*}
  Q \subset -(\la +1) (2d^2+2d+1) \co Q^{\prime \prime}.
 \end{equation*}
 \end{enumerate}
\end{thm}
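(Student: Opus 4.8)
\emph{The plan.} I would split the argument into an elementary reduction and one genuinely geometric lemma. First, $Q\subseteq-\la Q$ forces $0\in\operatorname{int}Q$ and $\la\ge 1$, and it is equivalent to $-Q\subseteq\la Q$ (hence also to $\tfrac1\la Q\subseteq -Q$). I claim it then suffices to prove the following two \emph{quantitative Steinitz-type} statements:
\begin{enumerate}
 \item[(i)] there is a set $Q'$ of at most $2d+1$ extreme points of $Q$ with $\tfrac1{d+1}Q\subseteq\co Q'$;
 \item[(ii)] there is a set $Q''$ of at most $2d$ extreme points of $Q$ with $\tfrac1{2d^2+2d+1}Q\subseteq\co Q''$.
\end{enumerate}
Indeed, given (i): $-\co Q'=\co\{-Q'\}\supseteq\tfrac1{d+1}(-Q)\supseteq\tfrac1{\la(d+1)}Q\supseteq\tfrac1{(\la+1)(d+1)}Q$, where the second inclusion uses $-Q\supseteq\tfrac1\la Q$ and the last uses $0\in\co Q'$ together with convexity; rearranging yields the first inclusion of the theorem, and (ii) yields the second in exactly the same way. (This even produces the slightly better constant $\la(d+1)$, which is consistent with the stated $(\la+1)(d+1)$.)

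\emph{Proof of (i) — the heart.} I would work with the centroid $c$ of $Q$. By Minkowski's classical inclusion $-(Q-c)\subseteq d(Q-c)$, equivalently $Q\subseteq(d+1)c-dQ$, the body $Q$ is $d$-balanced about $c$. Choose a maximal-volume simplex $S=\co\{v_0,\dots,v_d\}$ whose vertices are extreme points of $Q$; a standard consequence of maximality is $Q\subseteq(d+1)c_S-dS$, where $c_S$ is the centroid of $S$, so that $S$ itself already contains a $\tfrac1d$-homothet of $-Q$ (centred at a point $m$ manufactured from $c_S$ and $c$). The difficulty is that $m$ need not lie near the prescribed centre $0$: to repair this I would adjoin up to $d$ further extreme points of $Q$, namely the ones extremal in the "negative barycentric-coordinate" directions of $S$ (the directions in which $Q$ still protrudes past $S$), and then verify that the convex hull of these $\le 2d+1$ extreme points contains $\tfrac1{d+1}Q$; here one uses $-Q\subseteq\la Q$ once more to control the displacement between $0$ and the natural balancing centre.

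\emph{Proof of (ii), and the main obstacle.} Given (i), set $P=\co Q'\supseteq\tfrac1{d+1}Q$ with $|Q'|\le 2d+1$ and $0\in\operatorname{int}P$. Since $P$ has at most $2d+1$ vertices and $0$ is interior, Steinitz's theorem permits deleting one vertex $v^\ast$ while keeping $0$ in the interior of $P''=\co(Q'\setminus\{v^\ast\})$, the hull of the remaining $\le 2d$ extreme points; a quantitative form of this exchange bounds the radial distance of $v^\ast$ from $P''$ by a factor $\le 2d$, whence $P\subseteq 2d\,P''$ and therefore $P''\supseteq\tfrac1{2d}P\supseteq\tfrac1{2d(d+1)}Q\supseteq\tfrac1{2d^2+2d+1}Q$. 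I expect the hard part to be Step (i): getting a $\tfrac1{d+1}$-homothet of $Q$ inside the hull of only $2d+1$ extreme points while keeping the ratio linear in $d$. The centroid/Minkowski input is balanced only about the centroid, not about $0$, so the delicate points are transporting that balance to the prescribed interior point $0$ and showing that only $d$ (not $d+1$) "repair" extreme points are ever needed; once (i) is in hand, the reduction above and (ii) are routine.
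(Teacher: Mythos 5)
Your plan for part~(1) is in the same spirit as the paper's: take a maximal-volume inscribed simplex $S$, use the Gr\"unbaum-type inclusion $Q\subseteq -d(S-v)+v$ about the centroid $v$ of $S$, adjoin $\le d$ further extreme points to "pull the centre back towards $0$", and invoke $Q\subseteq-\la Q$ to bound the displacement. The paper's repair step is cleaner than your "negative barycentric-coordinate" idea: it simply picks, via Carath\'eodory, $d$ extreme points of $Q$ on a supporting hyperplane at the point $y$ where the ray from $0$ in direction $-v$ exits $Q$; then $-v/\la\in[v,y]\subseteq\co Q'$, and the rest is arithmetic. Note also a small but real inconsistency in your write-up: you state~(i) as a $\la$-free quantitative Steinitz statement ($\tfrac1{d+1}Q\subseteq\co Q'$ whenever $0\in\operatorname{int}Q$), yet your own sketch of its proof invokes $-Q\subseteq\la Q$. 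If the proof of~(i) genuinely needs $\la$, the clean two-step reduction you set up (prove a $\la$-independent containment, then trade a single factor of $\la$) collapses, and one must run the estimate in one pass as the paper does. Whether the $\la$-free form of~(i) is even true with constant $d+1$ is not obvious, and you give no argument for it.

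The real gap is part~(ii). You reduce it to the following \emph{quantitative Steinitz exchange}: given a polytope $P=\co Q'$ with at most $2d+1$ vertices and $0\in\operatorname{int}P$, one can delete a vertex $v^\ast$ so that $0$ remains interior to $P''=\co(Q'\setminus\{v^\ast\})$ and $P\subseteq 2d\,P''$ (equivalently, $\tfrac{1}{2d}v^\ast\in P''$). You present this as if it followed from "a quantitative form of [Steinitz's] exchange", but no such bound is established or cited, and it is not a routine fact: Steinitz's theorem only guarantees that \emph{some} vertex can be deleted while keeping $0$ interior, with no control whatsoever on how far the deleted vertex sticks out of the remaining hull. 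Pinning down a concrete polynomial factor here is itself a nontrivial quantitative Steinitz problem. The paper avoids this entirely: to drop from $2d+1$ to $2d$ points it stays inside the max-volume-simplex picture, replacing $S$ by the sub-simplex $S_1=\cof{v,F}$, where $F$ is the facet of $S$ hit by the ray from $v$ in the direction $v$, uses $S\subseteq(d+1)S_1-dw$ (with $w$ the centroid of $S_1$) to get $Q\subseteq-2d(d+1)S_1+(2d^2+2d+1)w$, and then repeats the Carath\'eodory/$\la$ step for $w$. The $d$ vertices of $F$ plus the $d$ Carath\'eodory points give the desired $2d$ extreme points. So your (ii) is not a proof: it hinges on an unproved quantitative deletion lemma with a specific constant $2d$, and even its truth (for arbitrary position of $0$ inside $P$) would need to be argued.
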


By shifting the origin, one can guarantee $\lambda = d$ for any convex body $Q$ in $\Red$, see Lemma~\ref{lem:d_centers}.

Recall that the polar 
$K^\polar$ of a convex set $K \subset \Red$ is defined by
\[
K^\polar = \{ p  \in \Red \st \iprod{p}{x} \leq 1  \quad \text{ for all } x \in K \}.
\]
By a standard polarity (duality) argument, Theorem~\ref{thm:bkp_grunbaum_direct} yields the following.
\begin{thm}\label{thm:BanachMazurBKP}
 Let $\{K_1,\dots,K_n\}$ be a family of  closed convex sets in $\Red$ such that their intersection 
 $K=K_1 \cap \dots \cap K_n,$ is a convex body.  Then there is a point $z$ in the interior of $K$ 
 such that
 \begin{enumerate}
  \item $(K-z)^\polar \subset 
- \la (K-z)^\polar,$ where $1 \leq \la \leq d;$ 
  \item\label{item:BKPplusone} 
there is a $\sigma\in\binom{[n]}{\leq2d+1}$ such that
 \begin{equation*}
  K_{\sigma}-z \subset -(\lambda+1)(d+1) (K-z) \subset 
  -4d^2 (K-z);
 \end{equation*}
  \item\label{item:BKPtwod}
there is a $\mu\in\binom{[n]}{\leq2d}$ such that
 \begin{equation*}
  K_{\mu}-z \subset -(\la +1) (2d^2+2d+1) (K-z) \subset 
  -8 d^3 (K-z).
 \end{equation*}
 \end{enumerate}
\end{thm}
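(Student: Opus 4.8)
The plan is to deduce Theorem~\ref{thm:BanachMazurBKP} from Theorem~\ref{thm:bkp_grunbaum_direct} by polar duality, applying the latter to the body $Q:=(K-z)^\polar$ for a well‑chosen center $z$. First I would fix the center. By Lemma~\ref{lem:d_centers} — concretely, by taking $z$ to be the Santal\'o point of $K$, which lies in $\operatorname{int}K$ — we may choose $z\in\operatorname{int}K$ so that $Q:=(K-z)^\polar$ is a convex body with $0$ in its interior and $Q\subset -dQ$. Letting $\la$ be the smallest positive number with $Q\subset -\la Q$, we get $\la\le d$, and also $\la\ge 1$ (for $\la<1$ the chain $Q\subset -\la Q\subset\la^{2}Q$ is impossible for a bounded $Q$ with $0$ in its interior); since $Q=(K-z)^\polar$, this is exactly part~(1).

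Next I would feed $Q$ into Theorem~\ref{thm:bkp_grunbaum_direct}, obtaining a set $Q'$ of at most $2d+1$ extreme points of $Q$ with $Q\subset -(\la+1)(d+1)\co Q'$ and a set $Q''$ of at most $2d$ extreme points with $Q\subset -(\la+1)(2d^{2}+2d+1)\co Q''$. The key duality dictionary is this: since $z\in\operatorname{int}K\subset\operatorname{int}K_{i}$ for every $i$, each $(K_{i}-z)^\polar$ is a compact convex set containing the origin, and $(K-z)^\polar=\co\bigl(\bigcup_{i}(K_{i}-z)^\polar\bigr)$; hence by Milman's theorem every extreme point of $Q$ lies in some $(K_{i}-z)^\polar$. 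Therefore we may pick $\sigma\in\binom{[n]}{\le2d+1}$ and $\mu\in\binom{[n]}{\le2d}$ with $Q'\subset\bigcup_{i\in\sigma}(K_{i}-z)^\polar$ and $Q''\subset\bigcup_{i\in\mu}(K_{i}-z)^\polar$; taking convex hulls and using $\bigl(\bigcap_{i\in\tau}(K_{i}-z)\bigr)^\polar=\co\bigl(\bigcup_{i\in\tau}(K_{i}-z)^\polar\bigr)$ (the closure in the general polarity formula being superfluous here, as each $(K_{i}-z)^\polar$ is compact) gives $\co Q'\subset(K_\sigma-z)^\polar$ and $\co Q''\subset(K_\mu-z)^\polar$.

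Combining these, $(K-z)^\polar\subset -(\la+1)(d+1)(K_\sigma-z)^\polar$ and $(K-z)^\polar\subset -(\la+1)(2d^{2}+2d+1)(K_\mu-z)^\polar$. Polarizing reverses inclusions; using the bipolar theorem (valid since $0$ is interior to $K-z$ and to each $K_\tau-z$) together with the identity $(-cA)^\polar=-\tfrac1c A^\polar$ for $c>0$, these turn into $K_\sigma-z\subset -(\la+1)(d+1)(K-z)$ and $K_\mu-z\subset -(\la+1)(2d^{2}+2d+1)(K-z)$. Finally, $1\le\la\le d$ and $0\in K-z$ (so $aT\subset bT$ for the convex set $T=z-K$ whenever $0<a\le b$) give $(\la+1)(d+1)\le(d+1)^{2}\le 4d^{2}$ and, using $d\ge 2$, $(\la+1)(2d^{2}+2d+1)\le(d+1)(2d^{2}+2d+1)\le 8d^{3}$, which are parts~(2) and~(3).

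The only delicate point is the polarity bookkeeping: one must verify that the possibly unbounded or lower‑dimensional sets $K_{i}-z$ cause no trouble — which is precisely why $z$ is taken interior to every $K_{i}$, so that each $(K_{i}-z)^\polar$ is compact and contains $0$ — that extreme points of the polar of an intersection lie in the union of the polars of the members, and that the negative scalar $-c$ passes to $-\tfrac1c$ under polarity while the bipolar theorem returns each $K_\tau-z$ unchanged. Everything else is routine arithmetic.
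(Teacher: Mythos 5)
Your proof is correct and follows essentially the same route as the paper: dualize via $Q=(K-z)^\polar$ with $z$ chosen by Lemma~\ref{lem:d_centers}, apply Theorem~\ref{thm:bkp_grunbaum_direct} to $Q$, observe that each extreme point of $Q$ lies in some $(K_i-z)^\polar$, and pass back through polarity. The only differences are cosmetic: you spell out the polarity bookkeeping (Milman's theorem, the bipolar theorem, the identity for $(-cA)^\polar$) and justify $\lambda\geq 1$ explicitly, whereas the paper states these steps more tersely.
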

The containments in Theorem~\ref{thm:BanachMazurBKP} immediately yield
Theorem~\ref{thm:BKP_vol_diam_d3}.

There are several ways to find a point $z$ in the interior of a convex body 
$K$ such that $(K-z)^\polar \subset 
-d (K-z)^\polar$, see Lemma \ref{lem:d_centers}.

We may interpret Theorem~\ref{thm:BanachMazurBKP} as a new type of quantitative Helly-type theorem. 
First, for a set $A$ in $\Red$, we call $\tilde A=\mu A+x$ a \emph{homothet} of $A$, if $x$ is a vector in $\Red$ and $\mu\in\R$, $\mu\neq 0$.
Next, for two convex bodies $K$ and $L$ in $\Red$ with $K\subset L$, we define their \emph{homothetic distance} as the quantity
\[
\inf\{|\lambda|\st K-z \subset L-z\subset \lambda(K-z), z\in\Red, \lambda\in\R\}.
\]
Note that $\lambda$ may be negative. This definition is motivated by Gr\"unbaum's extension of the Banach--Mazur distance to non-symmetric convex bodies \cite{Gru63} (see also \cite{GLMP04,JN11} ). Now, Theorem~\ref{thm:BanachMazurBKP} can be rephrased as finding a subfamily of a family of convex bodies such that the intersection of the subfamily is at a bounded homothetic distance from the intersection of the entire family.

As a lower bound on $\delta(d)$, we show that the $\sqrt{d}$ in Conjecture~\ref{conj:BKPdiam} cannot be replaced by a lower power of $d$.
\begin{thm}\label{thm:bkp_diam_lower_bound}
For every $i \in [n]$, let $K_i = \{x \in \Red \st \iprod{x}{u_i} \leq 1\}$, where $u_i$ is a unit vector. 
Then for any $\sigma \subset [n],$  there is a point  in $K_{\sigma}$ with norm $\frac{d}{\sqrt{\abs{\sigma}}}$.
\end{thm}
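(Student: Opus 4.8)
The plan is to dualize and then compare volumes. Write $k=\abs{\sigma}$ and put $A:=\cof{\{0\}\cup\{u_i:i\in\sigma\}}$. If the origin is not in the interior of $A$, then the vectors $u_i$ ($i\in\sigma$) lie in a closed halfspace, so $K_\sigma$ is unbounded and contains points of every norm; hence we may assume $0\in\operatorname{int}A$, so that $A=\cof{u_i:i\in\sigma}$ is a polytope inscribed in the unit sphere with at most $k$ vertices. Straight from the definition of the polar, $K_\sigma=A^\polar$, whence $A=K_\sigma^\polar$ by the bipolar theorem. For a convex body $C$ with $0$ in its interior one has $\sup_{x\in C}\|x\|=\bigl(\text{inradius of }C^\polar\text{ about the origin}\bigr)^{-1}$ (take the support function in each direction), so
\[
\sup_{x\in K_\sigma}\|x\|\ \ge\ \frac{d}{\sqrt k}\qquad\Longleftrightarrow\qquad B\!\left(0,\tfrac{\sqrt k}{d}\right)\not\subseteq A .
\]
Since $K_\sigma$ is closed, convex and contains the origin, the left inequality already forces a point of norm \emph{exactly} $\tfrac{d}{\sqrt k}$ to lie in $K_\sigma$, so it suffices to establish the right-hand assertion.

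Thus I would show that the in
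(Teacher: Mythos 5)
Your proposal is incomplete: it breaks off mid-sentence just after you announce the key reduction, so the essential inequality is never established. The dualization step itself is sound. Setting $A=\cof{u_i:i\in\sigma}$ (with the unbounded case disposed of separately), $K_\sigma=A^\polar$, and the identity
$\sup_{x\in K_\sigma}\|x\| = \bigl(\text{inradius of }A\text{ about the origin}\bigr)^{-1}$
is a correct and clean way to restate the theorem. The problem is that the restated inequality, namely that a polytope with at most $k$ vertices on the unit sphere and the origin in its interior has inradius at most $\sqrt{k}/d$, is exactly as hard as the original statement; the polarity step relocates the difficulty without reducing it.

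More seriously, the strategy you announce in the first line, ``dualize and then compare volumes,'' does not look like it can deliver the bound $d/\sqrt{k}$. A direct volume comparison is hopeless: one has $B(0,1)\subset K_\sigma$ (since each $u_i$ is a unit vector) and $B(0,r)\subset A\subset B(0,1)$, and these sandwichings only produce constants. A cone decomposition of $K_\sigma$ over its facets, together with the bound on the radius of each facet coming from $K_\sigma\subset B(0,1/r)$, gives an inequality of the form $\omega_d\le \frac{k}{d}\,\omega_{d-1}\bigl((1/r)^2-1\bigr)^{(d-1)/2}$, which for large $d$ only yields $r\lesssim 1$ and misses the factor $\sqrt{k}/d$ entirely when $k\ll d^2$. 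Even the stronger averaging argument on the sphere, comparing $\int \max_i\langle u_i,\theta\rangle^2\,d\sigma(\theta)$ against $r^2$ and against $\sum_i\int\langle u_i,\theta\rangle_+^2\,d\sigma(\theta)=k/(2d)$, gives only $r\le\sqrt{k/(2d)}$, which is weaker than $\sqrt{k}/d$ by a factor of order $\sqrt{d}$. The paper obtains the sharp exponent by invoking the Ball--Prodromou tight-frame lemma (Lemma~\ref{lem:trace_bound_tight_frame}): applied with $T=A^{-1}$ where $A=\sum_{i\in\sigma}u_i\otimes u_i$, it produces a point $q\in K_\sigma$ (in fact in the symmetrized slab intersection) with $\|q\|^2\ge\tr A^{-1}\ge d^2/\tr A=d^2/k$ by Cauchy--Schwarz. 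That lemma is a genuine ingredient, not something recoverable from a volume or averaging comparison, so your plan as written has a real gap: after the (correct) dual reformulation, you would need a tool of comparable strength to finish.
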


It follows that if the $u_i$ form a sufficiently dense subset of the unit sphere (with a large $n$), then $K=K_{[n]}$ is almost the unit ball, while for any $\sigma\subset[n]$ of size $|\sigma|=2d$, we have that $\diam(K_{\sigma})\geq \sqrt{d/2}$.

We mention the following conjecture which is closely related to Theorem~\ref{thm:bkp_diam_lower_bound}. It can be found in a different formulation in \cite[p.194]{Bo04}.
\begin{conj}
\label{conj:2d_cups}
Let $\{u_1, \dots, u_{2d}\}$ be unit vectors  in $\Red$.
There is a point in the set  
\[
\bigcap\limits_{i=1}^{2d}\{x \in \Red \st \iprod{u_i}{x} \leq 1\}
\]
 with norm $\sqrt{d}$.
\end{conj}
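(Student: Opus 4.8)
The plan is to pass to the polar picture. Write $K = \cof{u_1, \ldots, u_{2d}}$ and let $B$ denote the Euclidean unit ball of $\Red$; the set in the statement is exactly the polar body $K^\polar = \bigcap_{i=1}^{2d}\{x \st \iprod{u_i}{x} \le 1\}$. If $0 \notin \operatorname{int} K$, a hyperplane through the origin separating $K$ from $0$ gives a direction along which $K^\polar$ is unbounded, so the claim is trivial; hence assume $0 \in \operatorname{int} K$. Then $K$ is a polytope with at most $2d$ vertices, all on the unit sphere, $K^\polar$ is a polytope circumscribed about $B$ each of whose $\le 2d$ facets is tangent to $B$, and $\max\{\abs{x}\st x\in K^\polar\}$ is attained at a vertex of $K^\polar$. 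A short computation gives $\max\{\abs{x}\st x\in K^\polar\} = 1/\min\{\max_i\iprod{u_i}{\theta}\st \abs{\theta}=1\}$, so the assertion is equivalent to each of: \emph{(i)} a polytope with at most $2d$ facets circumscribed about $B$ has a vertex at distance at least $\sqrt d$ from the centre; \emph{(ii)} among any $2d$ unit vectors $u_1,\ldots,u_{2d}$ in $\Red$ there is a unit vector $\theta$ with $\iprod{u_i}{\theta}\le 1/\sqrt d$ for all $i$. The cube $[-1,1]^d$ (take $u_i = \pm e_j$ and $\theta = d^{-1/2}(1,\ldots,1)$) shows that $\sqrt d$ is the exact constant.

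In the plane this is elementary. With $0\in\operatorname{int} K$, let $m\le 4$ of the $u_i$ be the vertices of $K$; listed by angle, the $m$ gaps between cyclically consecutive normals are positive, each less than $\pi$ (otherwise $K^\polar$ is unbounded) and sum to $2\pi$, and the vertex of $K^\polar$ cut out by two consecutive tangent lines whose normals span an angle $g$ is at distance $\sec(g/2)$ from the origin. Since the largest gap is at least $2\pi/m\ge\pi/2$, some vertex of $K^\polar$ is at distance at least $\sec(\pi/4)=\sqrt 2$. The natural program in general dimension is the higher analogue: pick $d$ of the facet normals, with rows forming a matrix $U_S$; show that $v_S = U_S^{-1}\mathbf{1}$ is feasible, i.e.\ an honest vertex of $K^\polar$; and bound $\abs{v_S}^2 = \mathbf{1}^\top(U_S U_S^\top)^{-1}\mathbf{1}$ below by $d$. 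The Gram-matrix identity is encouraging, since an orthonormal choice of normals produces $\abs{v_S}^2 = d$ exactly; one would want to show that some $d$-subset of the given unit vectors is at least as ``spread out'' as an orthonormal frame, in the precise sense that $\mathbf{1}^\top(U_S U_S^\top)^{-1}\mathbf{1} \ge d$, while keeping $v_S$ on the correct side of every remaining halfspace.

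An alternative is to normalize (by choice of origin) so that $B$ is the John ellipsoid of $K^\polar$, apply John's theorem to get contact points among the $u_i$ and weights $c_i>0$ with $\sum_i c_i u_i = 0$ and $\sum_i c_i u_i u_i^\top = \id$, hence $\sum_i c_i = d$, and then convert this decomposition of the identity into a good direction $\theta$, for instance by seeking $\theta$ in the span of a few contact normals and exploiting the scalar identity $\sum_i c_i \iprod{u_i}{\theta}^2 = \abs{\theta}^2$ to control the individual inner products.

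The feasibility step is where I expect the essential difficulty to sit, and it is the reason the statement is only a conjecture. A polytope with just $2d$ facets can still have on the order of $d^{d/2}$ vertices, so ``the farthest vertex'' resists a quick case analysis; moreover the convenient John decomposition is not really available for free here, since moving the origin to make $B$ the John ellipsoid of $K^\polar$ displaces the contact points off the unit sphere and changes the problem. Even granting the decomposition, extracting a \emph{single} admissible direction $\theta$ from an isotropy-type identity for the normals — rather than only a statement about $\max(\iprod{u_i}{\theta},-\iprod{u_i}{\theta})$ — is exactly the gap between the currently known polynomial-in-$d$ bounds and the conjectured $\sqrt d$. Two further variants worth attempting, both so far inconclusive, are to induct on $d$ by projecting out the direction of the farthest vertex of $K^\polar$ while controlling how the facet count changes, and to combine the fractional Helly-type theorem of Dillon and Sober\'on with a rounding argument.
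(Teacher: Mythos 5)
This statement is presented in the paper as an open conjecture (it appears only as Conjecture~\ref{conj:2d_cups}, attributed in another formulation to \cite[p.194]{Bo04}); the paper offers no proof of it, so there is nothing to compare your argument against. What you have written is, accordingly, not a proof but a correct set of reductions together with an honest acknowledgement that the core of the problem remains open. The reductions themselves are sound: the set in question is indeed $K^\polar$ for $K=\cof{u_1,\dots,u_{2d}}$, the case $0\notin\operatorname{int}K$ is trivial by unboundedness, the equivalence with finding a unit $\theta$ satisfying $\iprod{u_i}{\theta}\le 1/\sqrt d$ for all $i$ is right, and the cube shows $\sqrt d$ is best possible. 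Your planar argument is also correct: the largest angular gap between consecutive extreme normals is at least $\pi/2$ and less than $\pi$, and the corresponding vertex of $K^\polar$ has norm $\sec(g/2)\ge\sqrt2$.

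The genuine gap is exactly where you locate it: for $d\ge 3$ neither the feasibility of a candidate vertex $v_S=U_S^{-1}\mathbf{1}$ for the remaining $d$ constraints, nor the extraction of a single admissible direction from a John-type decomposition of the identity, is established, and these are the obstructions that make the statement a conjecture rather than a theorem. Note that the paper's Theorem~\ref{thm:bkp_diam_lower_bound} (via the Ball--Prodromou lemma) proves the \emph{two-sided} analogue, producing a point of norm $d/\sqrt{\abs{\sigma}}=\sqrt{d/2}$ in the intersection of the slabs $\abs{\iprod{u_i}{x}}\le 1$; the step from $\max(\iprod{u_i}{\theta},-\iprod{u_i}{\theta})$ to $\iprod{u_i}{\theta}$ alone, which you flag, is precisely what is missing. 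So your submission should be read as a correct partial result (the case $d\le 2$, the tightness of the constant, and equivalent reformulations), not as a proof of the conjecture.
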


\section{Proof of Theorem~\ref{thm:bkp_grunbaum_direct}}
Clearly, there is a largest volume simplex in $Q,$ whose vertices are extreme points of $Q.$ Let $S$ be such a simplex. 
It is well known that
\begin{equation}\label{eq:maxvolsimplexcontainment}
  -d(S-v)+v\supseteq Q,
 \end{equation}
where $v$ is the centroid of $S$.
If $v$ coincides with the origin, then by this inclusion, there is nothing to prove. Thus, we assume that $v$ is not origin. 

To prove assertion \eqref{item:bkp_grun_direct_2dplus}, we set $\ell$ to be the ray in $\Red$ emanating from the origin in the 
direction $-v$, and let $y$ be the 
point of intersection of $\ell$ with the boundary of $Q$.
 By Carath\'eodory's theorem, there are extreme points $q_1, \dots, q_d$
 of $Q$ on a support hyperplane to $Q$ at $y$ such that $y\in\cof{q_1, \dots, q_d}$.
 
 We set $Q^\prime$  to be  the union of the vertex set of $S$ and the set $\{q_1, \dots, q_d\}$.
 Since $v\in Q$ and by $Q \subset - \la Q$, one has 
 $-v \in \la Q$, which, by the choice of $Q^\prime$, yields 
 $-v\in  \la \co{Q^\prime}$, that is,
\begin{equation}\label{eq:vnotfar}
 v\in - \la \co{Q^\prime}.
\end{equation}
Consequently, 
\begin{equation*}
Q\subseteq
-d(S-v)+v\subseteq
-d(\co Q^\prime-v)+v=
-d \co Q^\prime+(d+1)v\subseteq
-(\lambda+1)(d+1)\co{Q^\prime}.
\end{equation*}
This completes the proof of assertion \eqref{item:bkp_grun_direct_2dplus} of the theorem.

We proceed with assertion \eqref{item:bkp_grun_direct_2d} of the theorem.
Our goal is to find a vertex of $S$ that can be omitted.

Consider the ray in $\Red$ emanating from $v$ in the direction $v$, and let $q$ denote the intersection point of this ray and the boundary of $S$. 
Let $q$ be in a facet $F$ of $S$. 
Denote  the simplex $\cof{v, F}$ by $S_1$ and its centroid by $w$. 
Clearly, $S\subseteq (d+1)(S_1-w)+w=(d+1)S_1-dw$, and hence,
\[
 -d(S-v)+v=-dS+(d+1)v\subseteq
 -d\left( (d+1)S_1-dw\right)+(d+1)S_1\subseteq
\]\[
-d(d+1)S_1+d^2w+(d+1)\left(-d(S_1-w)+w \right)=
\]\[
-2d(d+1)S_1+(2d^2+2d+1)w.
\]

Thus, by \eqref{eq:maxvolsimplexcontainment}, we have
\begin{equation}\label{eq:maxvolsimplexcontainmentsquare}
   Q\subseteq -2d(d+1)S_1+(2d^2+2d+1)w.
\end{equation}

Let $\ell_2$ be the ray in $\Red$ emanating from the origin in the 
direction $-w$, and let $y_2$ be the 
point of intersection of $\ell$ with the boundary of $Q$.
By Carath\'eodory's theorem, there are extreme points 
$p_1,\dots,p_d$ of $Q$ on a support hyperplane to $Q$ at $y_2$ such that $y_2 \in\cof{p_1, \dots, p_d}$.

We set $Q^{\prime\prime}$
to be the union of the vertex set of $F$ and the set 
$\{p_1, \dots, p_d\}$, and claim that $v \in \co{Q^{\prime\prime}}$.
Indeed, consider the simplex $\cof{o, F}.$ By construction,  $S_1 \subset \cof{o, F}.$  It follows that the ray emanating from the origin in the direction of $w$ intersects the facet $F$ of $S$. Thus, the origin is in $\co{Q^{\prime\prime}}$, and hence, 
$v \in \co{Q^{\prime\prime}}$, see Figure~\ref{fig:simplexinbody}.

\begin{figure}
 \includegraphics[width=0.3\textwidth]{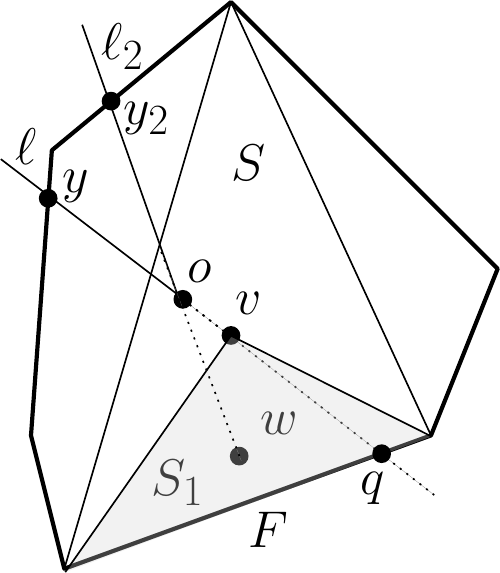}
 \caption{}\label{fig:simplexinbody}
\end{figure}

Since $q\in  \co{Q^{\prime\prime}}$, and $v$ is on the line segment $[o,q]$, we have $v\in \co{Q^{\prime\prime}}$. It follows that
\begin{equation}\label{eq:soneinp}
S_1\subset \co{Q^{\prime\prime}}. 
\end{equation}
Again, since $Q \subset - \la Q$,   we have $-w\in \la Q$, which, by the choice of $Q^{\prime\prime}$, yields $-w\in \la \co{Q^{\prime\prime}}$, that is,
\begin{equation}\label{eq:vnotfartwod}
 w\in - \la \co{Q^{\prime\prime}}.
\end{equation}

Finally, it follows from \eqref{eq:maxvolsimplexcontainmentsquare}, \eqref{eq:soneinp} and \eqref{eq:vnotfartwod} that
\begin{equation*}
Q\subseteq
-2d(d+1) \co{Q^{\prime\prime}} -\la (2d^2+2d+1)\co{Q^{\prime\prime}}\subseteq
-(\la +1) (2d^2+2d+1)\co{Q^{\prime\prime}}.  
\end{equation*}
This completes the proof of Theorem~\ref{thm:bkp_grunbaum_direct}.

\section{Proof of Theorem~\ref{thm:BanachMazurBKP}}\label{sec:BanachMazurBKP}

Recall that the \emph{centroid} $c(K)$ of  a body $K \subset \Red$ is defined by
\[
c(K) = \frac{1}{\vol{d}{K}}\int_K x \di x.
\] 

\begin{lem}
\label{lem:d_centers}
Let $K$ be a convex body in $\Red$.
Then the inclusion $(K-z)^\polar \subset 
-d (K-z)^\polar$ holds if $z$ is
\begin{enumerate}
\item\label{ass:centroid} the centroid of $K;$
\item\label{ass:santalo} the Santal\'o point of $K;$
\item\label{ass:john_center} the center of John ellipsoid of $K,$
\item\label{ass:lowner_center} the center of L\"owner ellipsoid of $K$.
\end{enumerate}
\end{lem}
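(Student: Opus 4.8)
The plan is to reduce all four cases to a single statement about the body $K-z$: namely, that whatever $z$ we choose, we have $-(K-z) \subset d(K-z)$, which (by polarity, since polarity reverses inclusions and $(d M)^\polar = \frac1d M^\polar$, and $(-M)^\polar = -(M^\polar)$) is equivalent to $(K-z)^\polar \subset -d(K-z)^\polar$. So after translating so that $z$ is the origin, it suffices to show that the chosen center $z$ of $K$ has the property that $-K \subset d K$ when $K$ is positioned with that center at the origin; equivalently, $K \subset -dK$. This is the classical fact that each of these four distinguished points is a ``$d$-center'': reflecting $K$ through it and scaling by $d$ recovers a set containing $K$.

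For the individual cases I would argue as follows. (i) For the centroid, this is \textbf{Minkowski's theorem} (also attributed to Radon): if $c(K)$ is the origin, then $K \subseteq -dK$; one proves it by a one-variable computation, slicing $K$ by lines through the centroid and using that on each chord the centroid divides it in ratio at most $d:1$ from either end, which follows from comparing the $(d-1)$-dimensional section areas along the chord (they are the $(1/(d-1))$-powers of concave functions). (iii) For the John ellipsoid, if its center is the origin and $\mathcal{E} \subseteq K$ is the maximal-volume inscribed ellipsoid, then $K \subseteq -d\,\mathcal{E} \subseteq -dK$; the inclusion $K \subseteq d\,\mathcal{E}$ (hence the reflected version) is John's theorem, and concentricity of $\mathcal{E}$ and $-\mathcal{E}$ makes the sign irrelevant — actually the sharper statement is $K \subseteq d\,\mathcal{E}$ directly, and since $\mathcal E = -\mathcal E$ relative to its center, $K \subseteq -d\mathcal E \subseteq -dK$. (iv) For the L\"owner ellipsoid the argument is the polar-dual of (iii): if $\mathcal{F} \supseteq K$ is the minimal-volume circumscribed ellipsoid centered at the origin, then $\frac1d \mathcal{F} \subseteq K$, so $K \subseteq \mathcal{F} = -\mathcal F \subseteq -dK$. (ii) For the Santal\'o point $s$: by definition it minimizes $\vol{d}(K-s)^\polar$, and a standard fact (the polar-dual of Minkowski's theorem, or a direct first-variation argument) gives that the centroid of $(K-s)^\polar$ is the origin; then applying case (i) to the body $(K-s)^\polar$ gives $(K-s)^\polar \subseteq -d(K-s)^\polar$, which is exactly the desired inclusion with $z = s$.

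The main obstacle — really the only nontrivial analytic point — is case (i), the Minkowski/Radon inequality that the centroid is a $d$-center; everything else is either a direct quote of John's theorem, its dual, or a reduction to (i) via polarity. I would either cite it (it is classical, e.g.\ in Bonnesen--Fenchel or Gr\"unbaum's survey) or include the short slicing proof: translate so $c(K)=o$, fix a unit vector $u$ and a boundary point $p$ of $K$ in direction $u$, let $-tp$ ($t>0$) be the other endpoint of the chord through $o$ in direction $u$; then the section function $h(s) = \vol{d-1}(K \cap \{x : \langle x,u\rangle = s\})^{1/(d-1)}$ is concave on the chord's parameter interval, and the centroid condition $\int s\, h(s)^{d-1}\,ds = 0$ combined with concavity forces $t \le d$, i.e.\ $-p \in dK$; since $u$ and $p$ were arbitrary, $-K \subseteq dK$. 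One caveat to state carefully: the four ellipsoid/Santal\'o centers are unique, so ``the'' is justified, and for (ii) one should note $K$ need not be symmetric so $s$ differs in general from the other three centers, but each independently yields the claimed inclusion.
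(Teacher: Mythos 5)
Your proof is correct and follows essentially the same route as the paper's: Minkowski/Grünbaum's theorem for the centroid (then polarize), reduction of the Santaló case to the centroid case applied to $(K-s)^\polar$ via the fact that the Santaló point makes the polar's centroid the origin, and the standard John/Löwner containments $K\subseteq d\mathcal{E}$, $\frac1d\mathcal{F}\subseteq K\subseteq\mathcal{F}$ for the two ellipsoid centers. The only cosmetic quibble is that your opening framing ("reduce all four cases to $-(K-z)\subset d(K-z)$") is slightly at odds with how you actually handle the Santaló case, where you prove the polar inclusion directly; but since the two inclusions are equivalent by bipolarity this is harmless.
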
 
We note that other ``centers'' may be found using \cite[Theorem 5.1]{GLMP04}.

\begin{proof}
It is well known (see \cite{grunbaum1960partitions, Gru63}) that if $c(K)$ is the origin, then $K\subseteq -dK$. By taking the polar of this containment, we obtain assertion \eqref{ass:centroid}.

Recall that the \emph{Santal\'o point} of a convex body $K \subset \R^d$ is the unique point $z$  minimizing  
$\vol{d} (K-z) \vol{d} (K-z)^\polar.$ 
Next, \eqref{ass:santalo} follows from \cite[Proposition~D.2]{aubrun2017alice} which states that for any convex body $K$ whose Santal\'o point is the origin, the centroid of 
 $K^\polar$ is the origin.

Recall that if $E$ is the John ellipsoid of $K$, that is, the largest volume ellipsoid in $K$, and $E$ is centered at $z$, then $K\subseteq d(E-z)+z$. A similar statement holds for the L\"owner ellipsoid of $K$, that is, the smallest volume ellipsoid containing $K$.
These containments then easily yield assertions \eqref{ass:john_center} and \eqref{ass:lowner_center}, we leave the details to the reader.
\end{proof}

\begin{proof}[Proof of Theorem~\ref{thm:BanachMazurBKP}]
By Lemma~\ref{lem:d_centers}, there is a point $z$ in $K$ such that the inclusion 
 $(K-z)^\polar \subset 
- \la (K-z)^\polar$  holds with $1 \leq \la \leq d$.
Fix such $z$ and set $Q = (K-z)^\polar$.

Since the polar of the intersection of convex bodies containing the origin is the convex hull of the polars of these bodies, each extreme point of $Q$ belongs to a set 
$(K_i - z)^\polar$ for some $i \in [n]$. That is,
if $p$ is an extreme point of $Q$, then there exists $i$ 
such that 
\[
K_i - z \subset \{x \in \Red \st \iprod{p}{x} \leq 1\}.
\]
We will say in this case that $i$ corresponds to $p$.

Let $Q^\prime$ and $Q^{\prime \prime}$ be as in the assertions of Theorem~\ref{thm:bkp_grunbaum_direct}.
For every $p \in Q^\prime$, we find one index $i \in [n]$ that corresponds to $p$, and set $\sigma$ to be the set of these indexes. Clearly, $K_\sigma$ satisfies assertion \eqref{item:BKPplusone} of Theorem~\ref{thm:BanachMazurBKP}.
Analogously, for every $p \in Q^{\prime\prime},$ we find one index $i \in [n]$ that corresponds to $p$, and set $\mu$ to be the set of these indexes. Clearly, $K_\mu$ satisfies assertion \eqref{item:BKPtwod} of Theorem~\ref{thm:BanachMazurBKP}.
\end{proof}

\section{Lower bound for diameter}

In this section, we prove Theorem~\ref{thm:bkp_diam_lower_bound}. 
The result follows from the following lemma due to K. Ball and M. Prodromou.
\begin{lem}[\cite{Ball2009}, Theorem 1.4]
\label{lem:trace_bound_tight_frame}
Let vectors  $\{v_1,  \dots, v_n\}\subset \Red$ satisfying $\sum\limits_1^{n} v_i \otimes v_i = \id$.
Then for any positive semi-definite operator $T \colon \Red \to \Red,$
there is a point $p$ in the intersection of the strips 
$\{x \in \Red \st \abs{\iprod{x}{v_i}} \leq 1\}$ satisfying 
$\iprod{p}{T p} \geq \tr  {T}$.
\end{lem}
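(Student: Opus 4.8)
The plan is to rephrase the lemma as the inequality
\[
\max\{\iprod{p}{Tp}\st p\in P\}\ \ge\ \tr T,\qquad\text{where}\quad P:=\bigcap_{i=1}^{n}\{x\in\Red\st\abs{\iprod{x}{v_i}}\le1\},
\]
observing that $P$ is compact because $\sum_i v_i\otimes v_i=\id$ forces $v_1,\dots,v_n$ to span $\Red$, and that we may assume $T$ positive definite (replace $T$ by $T+\varepsilon\id$ and let $\varepsilon\to0^{+}$; both sides depend continuously on $T$). The hypothesis $\sum_i v_i\otimes v_i=\id$ will be used through two facts: first, $\tr T=\sum_i\iprod{v_i}{Tv_i}$; second, for every $d$-element set $\tau\subset[n]$ for which $\{v_i\}_{i\in\tau}$ is a basis, the matrix $V_\tau$ with those columns satisfies $V_\tau V_\tau^{\top}=\sum_{i\in\tau}v_i\otimes v_i\preceq\id$, whence $(V_\tau V_\tau^{\top})^{-1}\succeq\id$. (In particular $\abs{v_i}\le1$ for all $i$, so $P$ contains the Euclidean unit ball.)

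For such a basis $\tau$, consider the circumscribed parallelepiped $\Pi_\tau:=\{x\st\abs{\iprod{x}{v_i}}\le1,\ i\in\tau\}\supseteq P$, whose vertices are the points $p_{\tau,\eta}:=\sum_{i\in\tau}\eta_i v_i^{*}$ with $\eta\in\{-1,1\}^{\tau}$, where $\{v_i^{*}\}_{i\in\tau}$ is the dual basis ($\iprod{v_i^{*}}{v_j}=\delta_{ij}$). Averaging over the sign vectors annihilates the cross terms:
\[
2^{-\abs{\tau}}\sum_{\eta}\iprod{p_{\tau,\eta}}{Tp_{\tau,\eta}}=\sum_{i\in\tau}\iprod{v_i^{*}}{Tv_i^{*}}=\tr\bigl(T(V_\tau V_\tau^{\top})^{-1}\bigr)\ \ge\ \tr T,
\]
the last step using $(V_\tau V_\tau^{\top})^{-1}\succeq\id$ and $T\succeq0$. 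Thus every basis $\tau$ supplies a vertex of $\Pi_\tau$ on which $\iprod{\cdot}{T\cdot}\ge\tr T$; the crux is to obtain such a vertex that actually lies in the smaller body $P$, i.e. one that also satisfies $\abs{\iprod{p_{\tau,\eta}}{v_j}}\le1$ for the indices $j\notin\tau$.

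To this end I would average over the bases as well, weighting a $d$-set $\tau$ by $\lambda_\tau:=\det(V_\tau)^2$: by the Cauchy--Binet formula $\sum_\tau\lambda_\tau=\det\bigl(\sum_i v_i\otimes v_i\bigr)=1$, so the $\lambda_\tau$ form a probability vector, and each $\lambda_\tau$ can be split among the $P$-feasible vertices of $\Pi_\tau$. This reduces the lemma to a comparison, for each $\tau$, of the $P$-feasible sign vectors against all sign vectors, which I expect to settle using first-order optimality at a maximiser: if $p^{*}$ maximises $\iprod{x}{Tx}$ over $P$, it is a vertex; taking a basis $\sigma$ among its active constraints and the corresponding signs $\eta^{*}$, an admissible small move of $p^{*}$ along $-\eta^{*}_i v_i^{*}$ stays in $P$ and hence forces $\eta^{*}_i\iprod{v_i^{*}}{Tp^{*}}\ge0$ for every $i\in\sigma$, so that $\iprod{p^{*}}{Tp^{*}}=\sum_{i\in\sigma}\abs{\iprod{v_i^{*}}{Tp^{*}}}$; one then bounds this quantity from below using the dual-basis identity $\tr T=\sum_{i\in\sigma}\iprod{v_i}{Tv_i^{*}}$ together with the constraints $\abs{\iprod{p^{*}}{v_j}}\le1$, $j\notin\sigma$. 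The step I expect to be the real obstacle is exactly this bookkeeping: ensuring that a high-value vertex survives inside $P$ rather than merely inside a circumscribed parallelepiped. Should the elementary accounting stall, I would instead realise the averaging analytically, applying the reverse Brascamp--Lieb (Barthe) inequality to the frame $\{v_i/\abs{v_i}\}$ with weights $\abs{v_i}^2$ and with exponential weight functions matched to $T$, and then letting the exponent tend to infinity to pass from an integral inequality over $P$ to the pointwise maximum --- the route parallel to Ball's original proof of Vaaler's volume theorem.
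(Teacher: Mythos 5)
First, a point of context: the paper does not prove this lemma at all --- it is quoted verbatim as Theorem~1.4 of Ball and Prodromou \cite{Ball2009} and used as a black box. So there is no internal proof to compare against; what you are attempting is a from-scratch proof of a known and genuinely nontrivial theorem.

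Your preliminary reductions are correct: $P=\bigcap_i\{x\st\abs{\iprod{x}{v_i}}\le 1\}$ is compact, one may assume $T\succ 0$, $\tr T=\sum_i\iprod{v_i}{Tv_i}$, and for a basis $\tau$ the average of $\iprod{p_{\tau,\eta}}{Tp_{\tau,\eta}}$ over the $2^{d}$ vertices of the circumscribed parallelepiped $\Pi_\tau$ equals $\tr\bigl(T(V_\tau V_\tau^{\top})^{-1}\bigr)\ge\tr T$. But this only yields a good vertex of $\Pi_\tau\supseteq P$, and, as you yourself acknowledge, the entire content of the theorem is in transferring this to a point that lies in $P$. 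That step is not carried out. The Cauchy--Binet weighting $\lambda_\tau=\det(V_\tau)^2$ is a reasonable instinct, but ``each $\lambda_\tau$ can be split among the $P$-feasible vertices of $\Pi_\tau$'' is not an argument: a basis $\tau$ need not have \emph{any} vertex of $\Pi_\tau$ inside $P$ (take two nearly parallel $v_i$ in the plane; the dual basis, hence the vertices, blow up), and no inequality comparing the feasible vertices to the full sign-average is stated, let alone proved. The first-order-optimality sketch stalls exactly where you predict: the admissible move along $-\eta^{*}_i v_i^{*}$ need not stay in $P$ unless the maximising vertex is simple, and from $\iprod{p^{*}}{Tp^{*}}=\sum_{i\in\sigma}\abs{\iprod{v_i^{*}}{Tp^{*}}}$ there is no visible route to the lower bound $\sum_{i\in\sigma}\iprod{v_i^{*}}{Tv_i}=\tr T$ using only the inactive constraints. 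The Brascamp--Lieb/Barthe fallback is merely named, and it is unclear how a pointwise existence statement would follow from an integral inequality by a limit in the exponent. In short, the proposal proves only the easy half (existence of a good point in a single circumscribed parallelepiped) and leaves the theorem itself unproved; for the purposes of this paper the correct move is to cite \cite{Ball2009}, as the authors do.
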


\begin{proof}[Proof of Theorem~\ref{thm:bkp_diam_lower_bound}]
We prove a bit stronger statement.  
We will find a point with the desired large norm in the subset 
\[
K_\sigma^\prime = \bigcap\limits_{i \in \sigma} \{x \st \abs{\iprod{u_i}{x}} \leq 1\} 
\]
of $K_{\sigma}$.
If $\{u_i \st i \in \sigma \}$ does not span the space, then $K_\sigma^\prime$ is unbounded. 
Thus, we assume $\{u_i \st i \in \sigma \}$ spans $\Red$. 
Consider  $A = \sum\limits_{i \in \sigma} u_i \otimes u_i$.
Since the vectors span the space, $A$ is positive definite.  
Using Lemma \ref{lem:trace_bound_tight_frame} with  
$ v_i = A^{-1/2} u_i,  i \in \sigma,$  
 and  $T  = A^{-1},$ we find a point $p$ in 
 \[
\bigcap\limits_{i \in \sigma} \{x \st \abs{\iprod{v_i}{x}} \leq 1\}.
\] such that
\[
\iprod{p}{A^{-1} p} \geq \tr{A^{-1}}.
\]

Denote $q = A^{-1/2} p$.
Then, by the choice of $p,$ 
\[
1 \geq \abs{\iprod{p}{A^{-1/2} u_i}} = 
\abs{\iprod{A^{-1/2}  p}{u_i}} = 
\abs{\iprod{q}{u_i}}. 
\] 
That is, $q \in K_\sigma^\prime$.
On the other hand, 
\[
\abs{q}^2 = \iprod{A^{-1/2} p}{A^{-1/2} p} = 
\iprod{p}{A^{-1} p } \geq \tr{A^{-1}}.
\]
Finally, since  $\tr A = \abs{\sigma}$ and by the Cauchy--Schwarz inequality,  
one sees that  
  $\tr A^{-1}$ is at least $\frac{d^2}{\abs{\sigma}}$.
Thus, $\abs{q}  \geq \frac{d}{\sqrt{\abs{\sigma}}}$.
\end{proof}


\bibliographystyle{amsalpha}
\bibliography{biblio}
\end{document}